\newtheorem{theorem}{Theorem}[section]
\newtheorem{cor}[theorem]{Corollary}
\newtheorem{lemma}[theorem]{Lemma}
\newtheorem{claim}[theorem]{Claim}
\newenvironment{conjecture}{\medskip \noindent \textbf{Conjecture:} 
  \centering}{\newline}
\newenvironment{question}{\begin{flushleft}\textbf{Open questions:}\end{flushleft}
	\centering}{}
\newenvironment{proof}{\medskip \noindent{\sc Proof:}}{\quad$\Box$\par\medskip} 
\newenvironment{proof2}[1]{\medskip \noindent{\sc Proof of #1:}}{\quad$\Box$\par\medskip} 
\def\pis{{\pi^*}}
\definecolor{brwn}{RGB}{140, 70, 20}
\definecolor{gren}{RGB}{  0, 140, 10}
\title{Restricted optimal pebbling is NP-hard} 
\author{	László F. Papp\thanks{Department of Computer Science and Information Theory, Budapest University of Technology and Economics, Hungary, lazsa@cs.bme.hu.}}
\begin{document}
\maketitle 

\begin{abstract} Consider a distribution of pebbles on a graph. A pebbling move removes two pebbles from a vertex and place one at an adjacent vertex.
	A vertex is reachable under a pebble distribution if it has a pebble after the application of a sequence of pebbling moves. A pebble distribution
	is solvable if each vertex is reachable under it. The size of a pebble distribution is the total number of pebbles. The optimal
	pebbling number $\pi^*(G)$ is the size of the smallest solvable distribution.
	A $t$-restricted pebble distribution places at most $t$ pebbles at each vertex. The $t$-restricted optimal pebbling number $\pi_t^*(G)$ is the size of the smallest solvable $t$-restricted pebble distribution. 
	We show that deciding whether $\pi^*_2(G)\leq k$ is NP-complete. We prove that $\pi_t^*(G)=\pi^*(G)$ if $\delta(G)\geq \frac{2|V(G)|}{3}-1$ and we show infinitely many graphs which satisfies $\delta(H)\approx \frac{1}{2}|V(H)|$ but $\pi_t^*(H)\neq\pi^*(H)$, where $\delta$ denotes the minimum degree. 
\end{abstract}

\section{Introduction} 
Graph pebbling is a game on graphs initiated by a method of Saks and Lagarias to answer a number-theoretic question of Erd\H os and Lemke, which was successfully carried out by Chung in 1989\cite{chung}. 
 Each graph in this paper is simple. 
 We denote the vertex set and the edge set of graph $G$ by $V(G)$ and $E(G)$, respectively. 
 A \emph{pebble distribution} $D$ on graph $G$ is a function mapping the vertex set to non-negative integers. We can imagine that each vertex $v$ has $D(v)$  pebbles. A \emph{pebbling move} removes two pebbles from a vertex $v$ (having at least two pebbles) and places one on an adjacent vertex $u$. We denote this move by $(v\rightarrow u)$. 
 
If $\sigma$ is a sequence of pebbling moves, then let $D\sigma$ be the distribution which we obtain after the application of 
$\sigma$ to distribtuion $D$.
A sequence of pebbling moves is \emph{executable} under $D$ if for any $\tau$ prefix of $\sigma$ $D\tau$ is a pebble distribution.  In other words, $\sigma$ never removes two pebbles from a vertex which does not have at least two pebbles. A vertex $v$ is \emph{$k$-reachable} under a distribution $D$ if there is an executable sequence of pebbling moves $\sigma$ such that $D\sigma(v)\geq k$. When $k=1$ we write reachable instead of $1$-reachable.
 We say that a distribution $D$ is \emph{solvable} if each vertex of the graph is reachable under $D$. 

Let $|D|$ denotes the total number of pebbles placed on the graph by $D$, so $|D|=\sum_{v\in V(G)}D(v)$. We call this quantity as the \emph{size of} $D$.
A pebble distribution $D$ on a graph $G$ is called \emph{optimal} if it is solvable and
its size is the smallest possible. This size is called the \emph{optimal pebbling number} and denoted by $\pi^*(G)$. 

The optimal pebbling number of several graph families are known. For example exact values were given for paths and cycles \cite{path1} \cite{path2}, ladders \cite{bdcm}, caterpillars \cite{caterpillar}, $m$-ary trees \cite{m-ary} and some staircase graphs\cite{stairs}. 
The values for graphs with diameter smaller than four are also characterized by some easily checkable domination conditions \cite{diam}. However, determining the optimal pebbling number for a given graph is NP-hard \cite{NPhard}.    

In \cite{restricted}, Chelalli \textit{et al.} introduced a new version of pebbling. In this setting, a pebble distribution is  \emph{$t$-restricted} if no vertex has more than $t$ pebbles. 
The \emph{$t$-restricted} optimal pebbling number, denoted by $\pi^*_t$, is the size of the solvable $t$-restricted distribution containing the least number of pebbles. It is easy to see that $\pi_2^*(G)\geq \pi_t^*(G)\geq \pi_{t+1}^*(G)\geq \pi^*(G)$.

A set $S$ is a dominating set of graph $G$ if $S\subseteq V(G)$ and each vertex of $G$ is contained in $S$ or adjacent to an element of $S$. The domination number $\gamma(G)$ of graph $G$ is the size of the smallest domination set of $G$. A function $f:V(G)\rightarrow\{0,1,2\}$ is called as a Roman domination function if each $v\in V(G)$ for which $f(v)=0$ there is a vertex $u$ which is adjacent to $v$ and $f(u)=2$. The weight of a Roman domination function is $\sum_{v \in V(G)} f(v)$. The Roman domination number $\gamma_R(G)$ is the minimum weight of a Roman domination function.
    
Chelalli \text{et al.} proved several bounds on these domination parameters by using the $2$-restricted optimal pebbling number. They showed that $\pi_2^*(G)\leq \gamma_R (G)$ \cite{restricted}. 
These domination parameters are well studied and they are in the limelight. Computing the Roman domination number is a hard task. More specifically, the decision version of the Roman domination number is NP-complete \cite{romdom}

Recently Shiue \emph{et al.} in \cite{gameisland} defined a new version of pebbling called $(d,t)$-pebbling. In this new model a pebble distribution $D$ is called $(d,t)$-solvable if each vertex $v$ can have $t$ pebbles after a sequence of pebbling moves which only remove pebbles from vertices whose distance from $v$ is at most $d$. The optimal $(d,t)$-pebbling number $\pi^*_{(d,t)}(G)$ of $G$ is the minimum size of a $(d,t)$-solvable distribution of $G$. Shiue showed that $\gamma_R (G)=\pi^*_{(1,1)}(G)$ \cite{dtpebbling}, so the Roman domination number is a special case of this generalised pebbling parameter. In a $(1,1)$-solvable distribution no vertex has more than $2$ pebbles, because $2$ pebbles at a vertex are enough to reach the adjacent vertices and we cannot move a pebble further in this setting. 
So to sum up, $\pi^*(G)\leq \pi^*_2(G)\leq \pi^*_{(1,1)}=\gamma_R(G)$.

Therefore the $t$-restricted optimal pebbling number is trapped between two graph parameters whose calculation is an NP-hard task. Somebody may ask that what is the computational complexity of the $2$-restricted optimal pebbling number?

In Section \ref{elso} we show that deciding whether the $t$-restricted optimal pebbling number is at most $k$ is NP-complete for any $t\geq 2$. Note that the $t=1$ case is trivial because $\pi^*_1(G)=|V(G)|$ for all graphs. We prove that $\pi^*(G)=\pi^*\left(G\cdot K_m\right )=\pi^*_t\left(G\cdot K_m\right)$ where $\cdot$ denotes the lexicographic graph product. We use this result to give a reduction from optimal pebbling to $t$-restricted optimal pebbling.


The authors of \cite{restricted} asked the following interesting question: What graphs have the same $2$-restricted optimal pebbling number and optimal pebbling number? We can also ask what kind of properties implies the equality of these two parameters. In Section \ref{masodik} we investigate the role of the minimum degree in this question and show some partial results. 

We prove that that if $\delta(G)\geq \frac{2|V(G)|}{3}-1$, where $\delta(G)$ denotes the minimum degree of $G$, then $\pi_2^*(G)=\pi^*(G)$. 
For any $n$ which satifies $n\equiv 1 \mod 4$ and $n\geq 9$ we present an $n$-vertex graph $H$ such that $\delta(H)=\frac{|V(H)|-3}{2}$ but $\pi_2^*(H)\neq \pi^*(H)$.









\section{Complexity of the t-restricted optimal pebbling number} 

\label{elso}

\begin{lemma}
\label{nincsko}
Let $G$ be a connected graph whose order is at least $2$. $G$ has an optimal distribution $D$ such that for every vertex $v$ which is not 2-reachable under $D$ we have that $D(v)=0$.
\end{lemma}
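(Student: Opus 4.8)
The plan is to pick an optimal distribution $D$ that, among all optimal distributions, minimises the number of vertices that carry a pebble yet fail to be $2$-reachable; call such vertices \emph{bad} and write $B$ for their set. Suppose $B\neq\emptyset$ and fix $v\in B$. Since $D(v)\geq 2$ would make $v$ trivially $2$-reachable, we must have $D(v)=1$. The decisive local observation is that this lone pebble is \emph{inert}: no executable sequence under $D$ ever moves a pebble out of or into $v$. Indeed, if some executable sequence involved $v$, look at its first move that does; the prefix preceding this move contains no move at $v$, so it leaves $D(v)=1$, and then a move $(v\to x)$ is impossible (it would need two pebbles on $v$) while a move $(x\to v)$ would immediately give $v$ two pebbles, contradicting $v\in B$ in either case.

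From inertness I would deduce that every executable sequence under $D$ is also executable under $D^-:=D-\mathbb{1}_v$, with the same effect on all vertices other than $v$; consequently every $w\neq v$ is still reachable — indeed, as highly reachable as before — under $D^-$, even though $v$ itself need not be. Now choose a neighbour $u$ of $v$, which exists because $G$ is connected on at least two vertices, and set $D':=D^-+\mathbb{1}_u=D-\mathbb{1}_v+\mathbb{1}_u$, so that $|D'|=|D|=\pi^*(G)$. I would then verify that $D'$ is solvable: each $w\neq v$ stays reachable by monotonicity (adding pebbles never hurts and $D'\geq D^-$), while $v$ becomes reachable because $u$ is reachable under $D^-$ via some sequence $\sigma$, so $D'\sigma(u)=D^-\sigma(u)+1\geq 2$ and the move $(u\to v)$ then puts a pebble on $v$. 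Hence $D'$ is again optimal.

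It remains to show that the bad set $B'$ of $D'$ is strictly smaller than $B$, contradicting the extremal choice of $D$. Here $v\notin B'$ because $D'(v)=0$; $u\notin B'$ because either $D'(u)\geq 2$, or $D'(u)=1$ and the sequence $\sigma$ above already certifies that $u$ is $2$-reachable; and for every remaining vertex $w$, if $w\notin B$ then either $D(w)=0=D'(w)$ or $w$ was $2$-reachable under $D$, hence — by inertness again — under $D^-$, hence under $D'\geq D^-$. Thus $B'\subseteq B\setminus\{u,v\}$ and $|B'|<|B|$, so the assumption $B\neq\emptyset$ is untenable, which is exactly the claim.

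The steps I expect to be routine are the monotonicity of $k$-reachability under a pointwise increase of the distribution and the bookkeeping that running a fixed sequence on $D^-$ rather than on $D^-+\mathbb{1}_u$ merely shifts the pebble count at $u$ by one throughout. The part needing genuine care — and what I would treat as the crux — is the inertness observation together with its \emph{second} use: transferring the $2$-reachability of the untouched vertices from $D$ down to $D^-$. A priori, deleting $v$'s pebble could spoil some vertex's $2$-reachability; the reason it cannot is precisely that no executable sequence ever had occasion to use that pebble.
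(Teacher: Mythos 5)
Your proof is correct and follows essentially the same route as the paper: identify a vertex with an inert single pebble that is not $2$-reachable, relocate that pebble to a neighbour, and argue that solvability is preserved while the set of bad vertices strictly shrinks (you phrase this as an extremal choice, the paper as an iteration, which is equivalent). You are in fact somewhat more explicit than the paper about why the lone pebble can never be moved under any executable sequence.
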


\begin{proof}
If a vertex is not 2-reachable but it has some pebbles, then it contains exactly one pebble.
Let $D$ be an optimal distribution on graph $G$ and let $S(D)$ be the set of vertices which are not 2-reachable under $D$ but each of them has a pebble.

Consider a vertex $v\in S(D)$. Since the graph is connected it has a neighbor $u$. This vertex can receive a pebble without the usage of the pebble placed at $v$.  
If we remove the pebble placed at $v$ and place it at $u$, then under the obtained distribution $u$ is 2-reachable so $v$ is reachable. The relocated pebble could not be moved under $D$ so if a vertex was $k$-reachable under $D$, then it is also $k$-reachable under the obtained distribution $D'$. Therefore $D'$ is optimal and $S(D')\subseteq S(D)$. Since $v$ is not in $S(D')$ we have that $|S(D')|<|S(D)|$. If we repeat this procedure, then we end up with a distribution $D^*$ such that $S(D^*)=\emptyset$.    
\end{proof}

$G\cdot H$ denotes the lexicographic product of graphs $G$ and $H$, which is well known.  It is defined as follows:
$V(G\cdot H)=V(G)\times V(H)$ and $(g_1,h_1)$ and $(g_2,h_2)$ are adjacent iff either $\{g_1,g_2\}\in E(G)$ or $g_1=g_2$ and $\{h_1,h_2\}\in E(H)$.

To prove our first theorem we are going to use the technique called collapsing. It is invented in \cite{bdcm} and it is used in several pebbling papers. 

Let $G$ and $H$ be simple graphs. Graph $H$ is called as a \emph{quotient of} $G$ if there is a surjective mapping $\phi: V(G)\rightarrow V(H)$ such that $\{h_1,h_2\}\in E(H)$ if and only if $h_1=\phi(g_1)$ and $h_2=\phi(g_2)$ for some $\{g_1,g_2\}\in E(G)$. 
We say that $\phi$ {\em collapses} $G$ to $H$ and if $D$ is a pebble distribution on $G$, then the \emph{collapsed distribution} $\phi(D)$ on $H$ is defined by
$\phi(D)(h)=\sum_{g\in V(G)| \phi(g)=h}D(g)$. 

\begin{lemma}[Collapsing \cite{bdcm}]
If $H$ is a quotient of $G$ then $\pi^*(G)\geq \pi^*(H)$.
\label{collapsing}
\end{lemma}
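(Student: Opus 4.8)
The plan is to prove the single fact that collapsing cannot destroy solvability: if $D$ is solvable on $G$, then $\phi(D)$ is solvable on $H$. Since $|\phi(D)| = |D|$ straight from the definition of the collapsed distribution, applying this to an optimal distribution $D$ on $G$ produces a solvable distribution on $H$ of size $\pi^*(G)$, hence $\pi^*(H) \le \pi^*(G)$.

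To transfer solvability I would project pebbling sequences from $G$ to $H$. Map each move $(v \to u)$ to the move $(\phi(v) \to \phi(u))$ when $\phi(v) \ne \phi(u)$ — this is a legitimate move because $\{v,u\} \in E(G)$ together with $\phi(v) \ne \phi(u)$ forces $\{\phi(v),\phi(u)\} \in E(H)$ by the quotient property — and to the empty sequence when $\phi(v) = \phi(u)$; extend to sequences by concatenation, giving a map $\sigma \mapsto \phi(\sigma)$. The heart of the argument is the following invariant, proved by induction on prefixes $\tau$ of a sequence $\sigma$ executable under $D$: the image $\phi(\tau)$ is executable under $\phi(D)$, and $\phi(D\tau) \le \phi(D)\phi(\tau)$ pointwise. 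For the inductive step, if the next move of $\sigma$ is $(v \to u)$ then $D\tau(v) \ge 2$, so $\phi(D)\phi(\tau)(\phi(v)) \ge \phi(D\tau)(\phi(v)) \ge D\tau(v) \ge 2$, which makes the image move (if any) executable on $H$; one then checks the two cases — a move within a fiber, where the $G$-side loses one pebble on that fiber while the $H$-side does nothing, so the inequality only improves; and a move across fibers, where exactly the two fibers involved change by the same amounts on both sides — and sees that the pointwise inequality survives. Finally, for any $h \in V(H)$ choose $g$ with $\phi(g) = h$ (surjectivity) and a sequence $\sigma$ executable under $D$ with $D\sigma(g) \ge 1$ (solvability of $D$); then $\phi(D)\phi(\sigma)(h) \ge \phi(D\sigma)(h) \ge D\sigma(g) \ge 1$, so $h$ is reachable under $\phi(D)$, and $\phi(D)$ is solvable.

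I do not expect a genuine obstacle here — the statement is essentially a monotonicity bookkeeping argument — but the step that needs care is the inductive verification of the invariant: one must track executability and pointwise domination simultaneously across the move-dropping, and keep in mind that a pebbling move always runs between two distinct vertices even though their $\phi$-images may coincide. The only conceptual ingredient is that extra pebbles never hurt reachability, which is exactly what the pointwise inequality $\phi(D\tau) \le \phi(D)\phi(\tau)$ encodes.
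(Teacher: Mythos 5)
Your argument is correct and is exactly the standard collapsing argument: project moves via $\phi$, drop intra-fiber moves, and maintain the pointwise domination $\phi(D\tau)\le\phi(D)\phi(\tau)$, which yields solvability of $\phi(D)$ and hence $\pi^*(H)\le|\phi(D)|=|D|=\pi^*(G)$. The paper itself gives no proof here (it cites the result from the Bunde--Chambers--Cranston--Milans--West paper), and your write-up matches the argument used there, with the one delicate point---that an edge $\{v,u\}\in E(G)$ with $\phi(v)\neq\phi(u)$ projects to an edge of $H$ by the quotient definition---handled correctly.
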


It is known that the optimal pebbling number of any connected $n$-vertex graph is at most $\left\lceil\frac{2n}{3}\right\rceil$ \cite{bdcm}. Interestingly, the same upper bound holds for the $2$-restricted optimal pebbling number \cite{shiue}.  
Now we are well prepared to state and prove our first theorem.

\begin{theorem}
If $G$ is a connected n-vertex graph, $m\geq \left\lceil\frac{n}{3}\right\rceil $ and $t\geq 2$ then:
$$\pi^*(G)=\pi^*\left(G\cdot K_m\right )=\pi^*_t\left(G\cdot K_m\right).$$
\label{egyenloseg}
\end{theorem}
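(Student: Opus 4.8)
The plan is to establish the chain of (in)equalities by combining three ingredients. First, the general inequality $\pi^*(G\cdot K_m)\le \pi^*_t(G\cdot K_m)$ holds for free from the remark in the introduction, so it suffices to prove $\pi^*(G\cdot K_m)\ge \pi^*(G)$, $\pi^*(G)\ge \pi^*_t(G\cdot K_m)$, and note $\pi^*(G)\le \pi^*(G\cdot K_m)$ trivially would be the wrong direction — so really the two nontrivial bounds are $\pi^*(G\cdot K_m)\ge \pi^*(G)$ and $\pi^*_t(G\cdot K_m)\le \pi^*(G)$. For the first, observe that $G$ is a quotient of $G\cdot K_m$ via the projection $\phi(g,h)=g$: one checks that $\{g_1,g_2\}\in E(G)$ iff there is an edge of $G\cdot K_m$ projecting to it (using $m\ge 1$), so Lemma~\ref{collapsing} gives $\pi^*(G\cdot K_m)\ge\pi^*(G)$.

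The heart of the argument is the reverse bound $\pi^*_t(G\cdot K_m)\le\pi^*(G)$, which also forces all quantities equal once we have the two inequalities above. I would take an optimal distribution $D$ on $G$ with $|D|=\pi^*(G)$, and by Lemma~\ref{nincsko} assume that every vertex not $2$-reachable under $D$ carries no pebble. The idea is to ``spread out'' the pebbles sitting on each vertex $g$ of $G$ among the $m$ copies $(g,h_1),\dots,(g,h_m)$ inside the clique-blob over $g$, placing at most one pebble per copy when $D(g)\le m$, and in general distributing $D(g)$ pebbles as evenly as possible so that no copy gets more than $\lceil D(g)/m\rceil$. Since $|D|=\pi^*(G)\le\lceil 2n/3\rceil\le 2\lceil n/3\rceil\le 2m$ and, more to the point, the pebbles of mass greater than $1$ sit on $2$-reachable vertices whose total we can bound, I need to argue that in fact we can always keep the per-vertex count at most $t$ (indeed at most $2$): vertices with $D(g)=1$ get a single pebble on one copy; vertices with $D(g)\ge 2$ can be handled because such "heavy" vertices are few — their pebble count is at most $\pi^*(G)$ and each copy-blob $K_m$ has $m\ge\lceil n/3\rceil$ slots, which comfortably absorbs $D(g)$ for the relevant range. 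The key structural point is that within a blob $K_m$ the copies are mutually adjacent, so two pebbles on two distinct copies $(g,h_i),(g,h_j)$ can be combined onto one of them by a single pebbling move, recovering the effect of having $D(g)$ pebbles stacked on $g$; thus any pebbling sequence executable under $D$ on $G$ can be simulated step-by-step under the spread-out distribution $D'$ on $G\cdot K_m$, because a move $(g_1\to g_2)$ in $G$ lifts to: consolidate two pebbles within the $g_1$-blob, then move one across an edge to some copy in the $g_2$-blob. Hence $D'$ is solvable (every copy $(g,h)$ is reachable: first make $g$ reachable via the simulation, landing a pebble somewhere in the $g$-blob, then if needed walk it to $(g,h)$ — though one must be slightly careful that a single pebble cannot move, so instead simulate reaching $2$ pebbles on $g$, i.e.\ use that $g$ is $2$-reachable, or note $g$ is reachable and the landed pebble is already in the correct blob and we may have chosen the target copy of each lifted move to be $(g,h)$ itself). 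Finally $D'$ is $t$-restricted by construction, giving $\pi^*_t(G\cdot K_m)\le |D'|=|D|=\pi^*(G)$.

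Assembling: $\pi^*(G)\le\pi^*(G\cdot K_m)$? No — we have $\pi^*(G\cdot K_m)\ge\pi^*(G)$ from collapsing, $\pi^*(G\cdot K_m)\le\pi^*_t(G\cdot K_m)$ from the introduction's monotonicity, and $\pi^*_t(G\cdot K_m)\le\pi^*(G)$ from the simulation. Chaining these three yields $\pi^*(G)\le\pi^*(G\cdot K_m)\le\pi^*_t(G\cdot K_m)\le\pi^*(G)$, so all three are equal, as claimed.

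The main obstacle I anticipate is the reachability bookkeeping in the simulation: making precise that a vertex $(g,h)$ is reachable under $D'$ — because isolated single pebbles are inert, one cannot simply "move a pebble to the desired copy," so the simulation must be set up so that whenever the $G$-sequence delivers a pebble to $g$ it arrives as a move $(g_1\to g_2=g)$ whose target copy we are free to choose as $(g,h)$; the one edge case is when $g$ already has a pebble under $D$ and is its own "certificate," which is handled by Lemma~\ref{nincsko} (such a $g$ is $2$-reachable, or trivially already occupied). A secondary point requiring a clean inequality is verifying $D'$ can be chosen $2$-restricted given $m\ge\lceil n/3\rceil$ and $|D|\le\lceil 2n/3\rceil$; this is where the hypothesis on $m$ is used, and it should reduce to an elementary counting estimate.
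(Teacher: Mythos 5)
Your overall architecture matches the paper's: the collapsing lemma (Lemma~\ref{collapsing}) via the projection $(g,h)\mapsto g$ gives $\pi^*(G\cdot K_m)\ge\pi^*(G)$, the chain $\pi^*_t \ge \pi^*$ is free, and the real work is exhibiting a $2$-restricted distribution on $G\cdot K_m$ of size $\pi^*(G)$ by spreading an optimal distribution $D$ of $G$ (normalized by Lemma~\ref{nincsko}) over the blobs. However, the core mechanism of your simulation is broken. You claim that ``two pebbles on two distinct copies $(g,h_i),(g,h_j)$ can be combined onto one of them by a single pebbling move.'' This is false: a pebbling move removes two pebbles from a \emph{single} vertex holding at least two pebbles, so two isolated pebbles on two adjacent vertices are completely inert. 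Consequently your plan to distribute $D(g)$ pebbles ``as evenly as possible,'' with at most one pebble per copy when $D(g)\le m$, destroys solvability — e.g.\ $D(g)=4$ spread as $1+1+1+1$ admits no move at all, while $4$ pebbles stacked on $g$ in $G$ deliver two pebbles to a neighbor. Even if you spread in pairs, your ``consolidate, then move across'' step is lossy: moving $2+2$ together yields $3$ on one copy, and then only one pebble crosses, so four pebbles deliver one instead of two, and weight is not preserved.

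The fix, which is what the paper does, is to place the pebbles of $g$ in \emph{pairs}: put $2$ pebbles on each of $\lfloor D(g)/2\rfloor$ copies (plus a single leftover pebble on a designated copy $(g,0)$ if $D(g)$ is odd), which is possible since $|D|\le\lceil 2n/3\rceil\le 2m$ (with the degenerate case $D(g)=2m$ handled separately). Then each move $(g\to g')$ of the $G$-sequence lifts \emph{directly} to a move from a copy of $g$ currently holding two pebbles to the designated copy $(g',0)$ — no intra-blob consolidation is ever needed — and pebbles received by the blob of $g$ accumulate at $(g,0)$, where they stack up and become usable for later moves. This gives a length-preserving, loss-free simulation; your observation about choosing the target copy of the final move to be $(g^*,j)$, and invoking $2$-reachability when $D(g^*)=1$, is the right way to finish, but it only works once the simulation itself is repaired as above.
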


\begin{proof}
Let $D$ be an optimal pebble distribution of $G$ which satisfies that if a vertex has one pebble then it is 2-reachable under $D$. Lemma \ref{nincsko} guarantees that such a pebble distribution exists.

Since $|D|\leq \left\lceil\frac{2n}{3}\right\rceil$ and $m\geq \left\lceil\frac{n}{3}\right\rceil$ we have that $|D|\leq 2m$. $D(g)=2m$ implies that all pebbles are placed at $g$. Assume that this is not the case, so for each $g$ $D(g)<2m$.

Denote the vertices of $K_m$ with $(0,1,2,\ldots, m-1)$. Let $Q$ be the following pebble distribution on $G\cdot K_m$:

\begin{equation*}
Q\left((g,i)\right)=
\begin{cases} 
1 &\text{if}\ i=0\ \text{and}\ D(g)\ \text{is odd}   \\
2 &\text{if}\ 1\leq i\leq \frac{D(g)}{2}\\
0 &\text{otherwise.}
\end{cases}
\end{equation*}

For each $g$ we have that $\sum_{l=0}^{m-1}Q((g,l))=D(g)$. Therefore $|D|=|Q|$. 
Now we show that $Q$ is a solvable distribution on $G\cdot K_m$. To do this we show that any vertex $(g^*,j)$ is reachable under $Q$. 

If $D(g^*)\geq 2$, then $Q((g^*,1))=2$, therefore $(g^*,j)$ is reachable under $Q$. 

When $D(g^*)=1$ we can use that $g^*$ is 2-reachable under $D$. Therefore in the $D(g^*)\leq 1$ case there is an executable sequence of pebbling moves $\sigma$ whose last move places a pebble at $g^*$. We can assume, that $\sigma$ does not contain unnecessary steps, therefore it does not remove a pebble from $g^*$ and it moves exactly one pebble to $g^*$. 

We construct $\tau$ which is an executable sequence of pebbling moves on the graph $G\cdot K_m$ and its last move places a pebble at $(g^*,j)$.
The lengths of $\sigma$ and $\tau$ will be the same. If $g'\neq g^*$, $\sigma_i=(g\rightarrow g')$ and this pebbling move appears exactly $k$ times among the first $i$ elements of $\sigma$, then let
\begin{equation*}
\tau_i=
\begin{cases}
((g,k)\rightarrow(g',0))\ \text{if}\ k\leq\frac{D(g)}{2}\\
((g,0)\rightarrow(g',0))\ \text{otherwise.}   
\end{cases}
\end{equation*}

Finally, if the last step of $\sigma$ is $\sigma_i=(g\rightarrow g^*)$, then let

\begin{equation*}
\tau_i=
\begin{cases}
((g,k)\rightarrow(g^*,j))\ \text{if}\ k\leq\frac{D(g)}{2}\\
((g,0)\rightarrow(g^*,j))\ \text{otherwise.}   
\end{cases}
\end{equation*}

$\tau$ mimics $\sigma$ in the following way: It gathers the pebbles which are received by $g$ at $(g,0)$. If a $(g\rightarrow g')$ move is applied by $\sigma$ then $\tau$ initially removes pebbles from vertices $(g,k)$, $k\geq 1$ where the pebbles of $g$ were distributed. After all these pebbles have been used $\tau$ uses the pebbles gathered to $(g,0)$. 

For any $l$ it is easy to see that $D\sigma_1\sigma_2\ldots \sigma_l(g)=\sum_{i=0}^{m-1}Q\tau_1\tau_2\ldots\tau_l((g,i))$. These guarantee that $\tau$ never wants to remove pebbles from a vertex which does not have at least two pebbles, thus it is executable.   

If $|D|=D(g)=2m$, then let $Q'$ be the distribution on $G\cdot K_m$ which places $2$ pebbles at each of $(g,i)$ and does not place any pebble at the rest of the vertices. It is easy to prove, by the same reasoning which we have used in the previous case, that $Q'$ is a solvable $2$-restricted distribution of $G\cdot K_m$ and $|Q'|=|D|$.   

So in both cases we have shown a solvable $2$-restricted distribution on $G\cdot K_m$ and the size of these distributions is $|D|$. 
Therefore $\pi^*(G)\geq \pi_2^*(G\cdot K_m)$. We know that $\pi_2^*(G\cdot K_m) \geq \pi^*_t(G\cdot K_m)\geq \pi^*(G\cdot K_m) $.
$G$ is a quotient of $G\cdot K_m$ because $\phi((g,i))=g$ is a proper surjective function. Therefore Lemma \ref{collapsing} gives that 
$\pi^*\left(G\cdot K_m\right)\geq\pi^*(G)$.

Putting everything together we get that 
$\pi^*(G)\geq \pi_2^*(G\cdot K_m)\geq  \pi^*_t(G\cdot K_m)\geq \pi^*(G\cdot K_m)\geq\pi^*(G)$. Since the leftmost and rightmost quantities are the same, all the quantities are equal.
\end{proof}

We consider the following two decision problems: 

\textbf{OPN:}  Given $G$, $k$: Is $\pi^*(G)\leq k$?   

\textbf{ROPN:}  Given $G$, $t\geq 2$, $k$: Is $\pi^*_t(G)\leq k$?  

Milans and Clark proved that OPN is NP-complete \cite{NPhard}. Using our previous theorem we are going to show that ROPN is NP-hard.
We do not give a precise proof that ROPN is in NP, since it is exactly the same as the proof that OPN is in NP. In the next paragraph we give the sketch of the proof. If the reader is interested in the details, then they can be found in \cite{NPhard}. 

The witness for $\pi^*(G)\leq k$ is the following: A solvable $2$-restricted pebble distribution $D$ of size $k$ and for each vertex $v$ a compact transcript of an executable sequence of pebbling moves $\sigma_v$, which satisfy $D\sigma_v(v)\geq 1$. 
This transcription does not contain the order of the pebbling moves, it just contains the total number of each possible moves. From this transcription it is possible to decide in polynomial time whether a compatible executable
sequence of pebbling moves exists or not.    


\begin{lemma}
OPN$\prec$ROPN.
\label{reduction}
\end{lemma}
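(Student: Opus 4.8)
The plan is to use Theorem~\ref{egyenloseg} essentially as a black box, since it already does the real work by identifying $\pi^*(G)$ with $\pi^*_t(G\cdot K_m)$. Given an instance $(G,k)$ of OPN, where $G$ is a connected $n$-vertex graph, I would output the instance $(G\cdot K_m,\,2,\,k)$ of ROPN with $m=\lceil n/3\rceil$. (Any fixed $t\geq 2$ may be used instead of $2$; the theorem covers all of them uniformly, so it is enough to hit one value of $t$.) The degenerate cases $n\leq 2$ can be decided directly, so we may assume $n\geq 3$, which in particular guarantees $m\geq 1$ and that the order-at-least-$2$ hypothesis needed by Lemma~\ref{nincsko} (and hence by Theorem~\ref{egyenloseg}) is satisfied. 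We may also assume $G$ is connected, because the NP-completeness proof of Milans and Clark that we are reducing from already yields connected instances.

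Next I would verify that this is a legitimate polynomial-time reduction. The graph $G\cdot K_m$ has $nm=n\lceil n/3\rceil=O(n^2)$ vertices and at most $O(n^4)$ edges, and whether two vertices $(g_1,h_1),(g_2,h_2)$ are adjacent is read off immediately from the definition of the lexicographic product; hence $G\cdot K_m$, together with the unchanged number $k$ and the constant $t=2$, is computable in time polynomial in the size of $(G,k)$. Correctness is then immediate: since $m\geq\lceil n/3\rceil$ and $2\geq 2$, Theorem~\ref{egyenloseg} gives $\pi^*(G)=\pi^*_2(G\cdot K_m)$, so $\pi^*(G)\leq k$ holds if and only if $\pi^*_2(G\cdot K_m)\leq k$; that is, $(G,k)$ is a yes-instance of OPN exactly when $(G\cdot K_m,2,k)$ is a yes-instance of ROPN. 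Together with the sketch (given in the preceding paragraphs of this section) that ROPN lies in NP and with the known NP-completeness of OPN, this shows ROPN is NP-complete.

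I do not expect a genuine obstacle here, precisely because Theorem~\ref{egyenloseg} has already been established; the only points needing care are bookkeeping ones. One must confirm that $|V(G\cdot K_m)|$ stays polynomial (it is quadratic in $n$, so fine), dispose of the small graphs where Lemma~\ref{nincsko} does not formally apply, and make sure that the hardness being reduced from is the connected-graph version of OPN, since connectedness of $G$ is what makes $G$ a quotient of $G\cdot K_m$ with $G\cdot K_m$ still connected and what the $\lceil 2n/3\rceil$ upper bound used inside Theorem~\ref{egyenloseg} relies on.
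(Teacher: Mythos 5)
Your reduction is correct and is essentially the paper's own: the paper likewise maps $(G,k)$ to $(G\cdot K_m,t,k)$ and invokes Theorem~\ref{egyenloseg}, the only cosmetic difference being that it takes $m=|V(G)|$ where you take the smaller $m=\lceil n/3\rceil$, both of which satisfy the theorem's hypothesis and keep the output polynomial in size. Your extra remarks about connectedness and small instances are sensible bookkeeping that the paper leaves implicit.
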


\begin{proof}
Let $f$ be the function which maps $G$ to $G\cdot K_{|V(G)|}$.
If $G$ is the input of OPN then let the corresponding input of ROPN be $f(G)$. 
$f$ can be calculated in polinomial time. By Lemma \ref{egyenloseg} $\pi^*(G)\leq k$ iff $\pi^*_t(f(G))\leq k$. 
\end{proof}

OPN is NP-complete and ROPN is in NP, therefore Lemma \ref{reduction} implies our main complexity result:

\begin{theorem}
ROPN is NP-complete.
\end{theorem}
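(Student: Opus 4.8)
The plan is to assemble three facts: (i) ROPN lies in NP, (ii) OPN is NP-hard, and (iii) OPN reduces to ROPN in polynomial time. Items (ii) and (iii) are already available — (ii) is the Milans--Clark result \cite{NPhard}, and (iii) is Lemma \ref{reduction} — so the only thing that genuinely needs to be checked is (i), and even that is asserted in the text to be identical to the OPN case.

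First I would verify ROPN $\in$ NP. For a yes-instance $(G,t,k)$ the certificate is a $t$-restricted solvable distribution $D$ with $|D|\le k$, together with, for each vertex $v$, a compact transcript of an executable sequence $\sigma_v$ with $D\sigma_v(v)\ge 1$. The distribution has polynomially many pebbles (at most $k$, which we may assume is $O(|V(G)|)$ since otherwise the instance is trivially a yes-instance), each transcript records the multiplicities of at most $O(|V(G)|^2)$ move types with polynomially bounded entries, and deciding whether a compatible executable ordering exists is done in polynomial time exactly as in \cite{NPhard}; the additional restriction $D(v)\le t$ for all $v$ is checked by inspection. Thus the verifier runs in polynomial time and ROPN $\in$ NP.

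Next I would invoke Lemma \ref{reduction}: the map $G\mapsto G\cdot K_{|V(G)|}$ is computable in polynomial time, and by Theorem \ref{egyenloseg} it satisfies $\pi^*(G)\le k \iff \pi^*_t\!\left(G\cdot K_{|V(G)|}\right)\le k$ for every $t\ge 2$, so it is a polynomial-time many-one reduction from OPN to ROPN. Since OPN is NP-complete \cite{NPhard}, it is NP-hard, and hence so is ROPN. Combining this with membership in NP gives that ROPN is NP-complete. There is no remaining obstacle at this stage — all the substantive content sits in Theorem \ref{egyenloseg} (correctness of the lexicographic-product construction) and in the prior NP-hardness of OPN; the final theorem is merely the composition of these results.
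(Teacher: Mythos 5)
Your proposal is correct and follows essentially the same route as the paper: membership in NP via the same witness (a $t$-restricted solvable distribution plus compact move transcripts, verified as in \cite{NPhard}, with the extra check $D(v)\le t$), combined with the reduction of Lemma \ref{reduction} from the NP-complete problem OPN. The paper's own proof is exactly this one-line composition, so no further comparison is needed.
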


\section{Large minimum degree enforces $\pis(G)=\pi_t^*(G)$}

\label{masodik}

The authors of \cite{restricted} asked for a characterization of graphs whose optimal pebbling number and $2$-restricted optimal pebbling number are the same. We think that such a characterization is elaborate. Note that there are many graphs which have this property. For example paths, cycles and complete graphs.

Lemma \ref{egyenloseg} gives infinitely many examples. The graphs $G\cdot K_k$ belongs to the investigated family when $k\geq \left\lceil \frac{|V(G)|}{3}\right\rceil$. The number of vertices in $G\cdot K_k$ is $|V(G)|\cdot k$ and the minimum degree $\delta$ is at least $2k-1$. Therefore the ratio of the minimum degree and the number of vertices of $G\cdot K_k$ is at least $\frac{2}{|V(G)|}$.

Somebody may ask, that does high minimum degree guarantees that $\pi^*_2=\pi^*$? The answer is yes. We need a short lemma to prove this. 

\begin{lemma}
Let $G$ be a graph. If $D$ is a pebble distribution on $G$ such that $D(v)=3$ for a particular vertex $v$ and $D(u)\leq 1$ for any other vertex $u$, then $D$ is not optimal. 
\label{3ko}
\end{lemma}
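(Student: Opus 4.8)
The plan is: since a non-solvable distribution is trivially not optimal, assume $D$ is solvable and exhibit a \emph{solvable} distribution of size $|D|-1$. The candidate is the distribution $D'$ obtained from $D$ by deleting one pebble from $v$, so that $D'(v)=2$ and $D'(u)=D(u)$ for every $u\neq v$; then $|D'|=|D|-1$, which shows $D$ is not optimal. So the whole task reduces to proving that $D'$ is solvable.

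The key structural feature to exploit is that in $D$ the vertex $v$ is the \emph{only} vertex holding at least two pebbles. Fix a target vertex $z$; I may assume $z\neq v$, since $v$ already carries pebbles under $D'$ and is reachable by the empty sequence. Because $D$ is solvable, choose an executable sequence $\sigma$ with $D\sigma(z)\geq 1$ whose move-multidigraph is acyclic (such a sequence exists by the No-Cycle Lemma of graph pebbling). I claim $\sigma$ makes no move \emph{into} $v$: if $(w\to v)$ were such a move, then $w$ held at least two pebbles at that instant, so (as $D(w)\leq 1$) $w$ must have received a pebble earlier; following this "received before it sent" chain of moves backwards one necessarily reaches a vertex that started with at least two pebbles --- which can only be $v$ --- and this produces a directed closed walk of moves, contradicting acyclicity. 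Hence no pebble is ever moved onto $v$, so the number of pebbles that ever sit on $v$ during $\sigma$ equals $D(v)=3$, and therefore $\sigma$ contains at most one move \emph{out of} $v$.

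Now execute $\sigma$ starting from $D'$ instead of from $D$. Every move of $\sigma$ other than the (at most one) move out of $v$ takes place among vertices different from $v$, and on those vertices the two executions agree step by step, because $\sigma$ never puts a pebble on $v$; hence those moves remain executable under $D'$. If $\sigma$ does contain one move $(v\to y)$, then just before it $v$ has neither gained nor lost a pebble, so it still holds its initial count, which is $2$ under $D'$ --- exactly enough to perform the move. Thus $\sigma$ is executable under $D'$ and $D'\sigma(z)=D\sigma(z)\geq 1$. Since $z$ was arbitrary, $D'$ is solvable, so $D$ is not optimal. (If $v$ is isolated, or more generally if $G$ is disconnected, run the argument inside the component of $v$, reachability being component-local; when $v$ is isolated $D'$ is solvable simply because $v$'s surplus pebbles cannot help any other vertex.)

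The one genuinely delicate point is the claim that the reaching sequence may be chosen with no move into $v$: this is exactly where the No-Cycle Lemma enters, and if one prefers a self-contained treatment it comes down to excising a closed walk of moves from a shortest reaching sequence, which is the step I would write most carefully. Everything after that is the bookkeeping of comparing the two executions of $\sigma$ and observing that they differ only in the pebble count at $v$, which under $D'$ never drops below what a move requires.
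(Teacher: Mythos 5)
Your proof is correct, but it reaches the key fact (``the third pebble of $v$ is never needed'') by a different mechanism than the paper. You invoke the No-Cycle Lemma to pick a reaching sequence with an acyclic move-multidigraph, then run a backward ``received before it sent'' chain to show no move ever enters $v$, hence at most one move leaves $v$, and finally simulate the same sequence from $D'$ (the distribution with $D'(v)=2$), checking that the two executions differ only in the count at $v$. The paper stays self-contained: it observes that the first move must come from $v$, that afterwards at every moment at most one vertex exceeds its initial count (a single travelling ``excess'' pebble), so at the instant the third pebble of $v$ would be spent the current configuration is pointwise dominated by $D$; the suffix of the sequence from that move on is then executable under $D$ itself and reaches the same target while using only two of $v$'s pebbles. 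Your route buys a cleaner, fully explicit verification (the simulation under $D'$ is spelled out, whereas the paper leaves the ``one useless pebble remains'' step implicit and its truncation really wants a shortest-sequence or iteration remark), at the cost of importing an external result --- the No-Cycle Lemma --- which this paper never states or cites, so you would need to add a reference (it is standard, e.g.\ in the Moews/Milans--Clark line of pebbling papers) or inline its proof, which, as you note, amounts to the same cycle-excision idea the paper's truncation argument performs in this special case. Both arguments are sound; yours is the more robust write-up, the paper's the more elementary one.
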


\begin{proof}

To reach a vertex the third pebble of $v$ is not needed. Any nonempty sequence of pebbling moves, which can be applied to $D$, removes two pebbles from $v$ in the first step. 
Then at each step only one vertex has more than one pebble and every other vertex has at most as many pebbles as initially. When the third pebble of $v$ is used each vertex has at most as many pebbles as it has under $D$. Therefore the subsequence whose first move removes the third pebble of $v$ and contains all the later moves is executable under $D$ and reaches the same vertex as the original sequence of pebbling moves. But it removes only two pebbles from $v$, so one useless pebble remains there. 
\end{proof}

This lemma immediately gives us a new class of graphs where equality holds between the two investigated parameters. 

\begin{cor}
If $\pi^*(G)\leq 3$, then $\pi_2^*(G)=\pi^*(G)$.
\label{max3}
\end{cor}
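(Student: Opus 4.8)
**Proof plan for Corollary (if $\pi^*(G)\leq 3$, then $\pi_2^*(G)=\pi^*(G)$).**

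The plan is to start from an optimal distribution $D$ of $G$, so $|D|=\pi^*(G)\leq 3$, and massage it into a solvable $2$-restricted distribution of the same size; since every $2$-restricted distribution is in particular a distribution, $\pi_2^*(G)\geq\pi^*(G)$ always, so producing a solvable $2$-restricted distribution of size $\pi^*(G)$ forces equality. First I would handle the two easy size regimes. If $|D|\le 2$, then trivially $D$ places at most $2$ pebbles on each vertex, so $D$ itself is $2$-restricted and we are done. If $|D|=3$, the only way $D$ fails to be $2$-restricted is if some vertex $v$ carries all $3$ pebbles and every other vertex carries $0$; so it suffices to treat that single configuration.

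So assume $D(v)=3$ and $D(u)=0$ for all $u\neq v$. This $D$ is, up to the labels, exactly the kind of distribution ruled out by Lemma~\ref{3ko} (there the hypothesis is $D(v)=3$ and $D(u)\le 1$ elsewhere, and here the "elsewhere" values are all $0\le 1$). By Lemma~\ref{3ko}, such a $D$ is not optimal: we can delete the third pebble of $v$ and still reach every vertex, i.e. the $2$-pebble distribution $D'$ with $D'(v)=2$, $D'(u)=0$ otherwise, is already solvable. But $|D'|=2<3=\pi^*(G)$, contradicting optimality of $D$. Hence the configuration $D(v)=3$ cannot arise from an optimal $D$ at all when $\pi^*(G)=3$; in other words, if $\pi^*(G)=3$ then every optimal distribution is automatically $2$-restricted (a single vertex can hold at most $2$ of the $3$ pebbles, otherwise Lemma~\ref{3ko} applies), so $\pi_2^*(G)=3=\pi^*(G)$.

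Combining the cases $|D|\le 2$ and $|D|=3$ gives the corollary. There is essentially no obstacle here — the whole content is packaged in Lemma~\ref{3ko}; the only thing to be careful about is the bookkeeping that "not $2$-restricted with $\le 3$ pebbles" literally means "one vertex has all three," which is immediate, and that in the $|D|\le 2$ case no argument is needed at all. One could phrase the write-up even more briefly: take any optimal $D$; if it is $2$-restricted we are done, and if not then $|D|\ge 3$ forces $|D|=3$ with all pebbles on one vertex, which Lemma~\ref{3ko} forbids for an optimal distribution.
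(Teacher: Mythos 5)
Your argument is correct and is essentially the paper's own (implicit) proof: any optimal distribution of size at most $3$ that is not $2$-restricted must concentrate three pebbles on one vertex with zero elsewhere, which Lemma~\ref{3ko} rules out for an optimal distribution, so some optimal distribution (indeed every one) is $2$-restricted. The case bookkeeping you add is harmless but the content is identical.
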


\begin{figure}

\begin{center}
\input{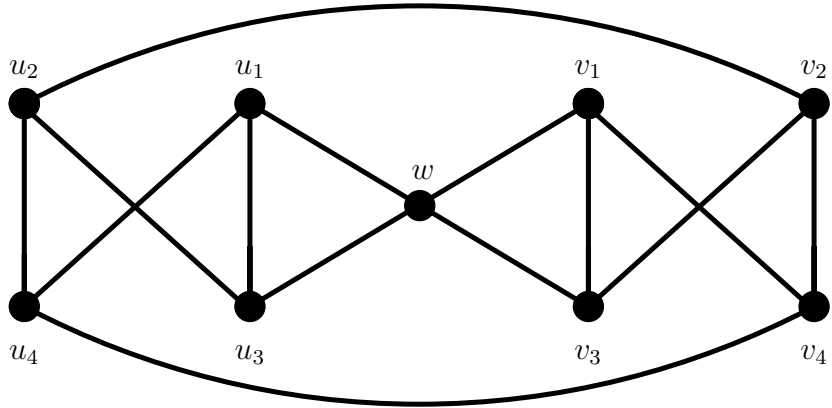}

\end{center}
\caption{The graph $H_4$.}
\label{examplegraph}
\end{figure}


\begin{claim}
Let $G$ be a graph of order $n$. If $\delta(G)\geq \frac{2}{3}n-1$ then $\pi_2^*(G)=\pi^*(G)$.
\label{ketharmad}
\end{claim}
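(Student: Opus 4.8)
The plan is to start from an optimal distribution $D$ on $G$ with $|D|=\pis(G)$ and massage it into a $2$-restricted solvable distribution of the same size, which together with the trivial inequality $\pi_2^*(G)\geq\pis(G)$ would finish the proof. By Corollary \ref{max3} we may assume $\pis(G)\geq 4$, and by Lemma \ref{nincsko} we may assume every vertex carrying a single pebble is $2$-reachable under $D$. Suppose for contradiction that $D$ is \emph{not} $2$-restricted, so some vertex $v$ has $D(v)\geq 3$. The high minimum degree will be used to show that the pebbles stacked on $v$ are wasteful: since $\deg(v)\geq\frac{2}{3}n-1$, the closed neighborhood $N[v]$ misses at most $\frac{n}{3}$ vertices, and any vertex outside $N[v]$ is at distance $2$ from $v$ through many common neighbors.

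The key step is to bound how much "mass" $D$ can place on and near $v$. First I would argue that $D(v)\leq 3$: if $D(v)\geq 4$, then $v$ itself is $2$-reachable just by firing $v\to u\to v$-type moves is not quite it, so instead I would collapse — using Lemma \ref{collapsing} — the graph by identifying all of $V(G)\setminus N[v]$ into a structure where the surplus pebbles on $v$ can be shown redundant, contradicting optimality; alternatively, a direct exchange argument (move two pebbles off $v$ onto a neighbor, as in the proof of Lemma \ref{3ko}) shows that with $D(v)\ge 4$ one can drop to $D(v)=2$ and redistribute, keeping solvability but lowering the size. So $D(v)=3$ and, by Lemma \ref{3ko} applied locally, some other vertex must also carry $\ge 2$ pebbles, or some vertex carries a second pebble "near" $v$. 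Now I would look at the total size: each vertex of $N[v]$ is dominated and the only vertices possibly not dominated by the stack on $v$ lie in $V(G)\setminus N[v]$, a set of size at most $\frac{n}{3}$; since every such vertex is at distance $2$ from $v$, reaching all of them from a single source is expensive, but in fact a cheaper $2$-restricted distribution exists. Concretely, I would exhibit a competitor: put $2$ pebbles on $v$ and handle $V(G)\setminus N[v]$ recursively/greedily, showing its size is at most $|D|$ while being $2$-restricted.

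The main obstacle — and the step I expect to require the most care — is controlling what happens \emph{outside} $N[v]$. Having $\delta(G)\geq\frac{2}{3}n-1$ forces the non-neighbors of $v$ to themselves have large degree, hence to be densely interconnected and to share many common neighbors with $v$; I would want a lemma saying that a graph on $\le\frac{n}{3}+1$ vertices (the non-neighbors of $v$ together with one extra hub) with such density has $2$-restricted optimal pebbling number small enough that attaching it costs no more than the wasted third pebble on $v$. This is where the constant $\frac{2}{3}$ is exactly tight, so the estimate must be pushed to the edge: the diameter of $G$ is at most $2$ (any two vertices have overlapping neighborhoods once $\delta>\frac{n}{2}-1$), so one could invoke the diameter-$2$ characterization of $\pis$ via domination alluded to in the introduction, but one must redo it in the $2$-restricted setting. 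I would therefore structure the argument as: (i) reduce to $\pis(G)\ge 4$ and $D(v)=3$ via Lemma \ref{3ko} and Corollary \ref{max3}; (ii) use $\delta(G)\ge\frac{2}{3}n-1$ to pin down the structure of $G-N[v]$ and show diameter $\le 2$; (iii) build an explicit $2$-restricted solvable distribution of size $\le|D|$ using a small dominating set, contradicting $D$ not being $2$-restricted; (iv) conclude $\pi_2^*(G)=\pis(G)$. Step (iii) is the crux and where the tightness of the hypothesis is consumed.
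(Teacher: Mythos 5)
Your proposal assembles the right ingredients (diameter at most $2$, hence $\pis(G)\leq 4$; Lemma \ref{3ko}/Corollary \ref{max3} to dispose of $\pis(G)\leq 3$; the need for an explicit $2$-restricted solvable distribution of size $4$ in the remaining case), but the step you yourself label as the crux --- item (iii), producing that distribution --- is left as a placeholder, and that is precisely the entire content of the claim. The paper's proof of this step is short and concrete: if $\pis(G)=4$, pick two \emph{non-adjacent} vertices $u$ and $v$ and put $2$ pebbles on each. Then every vertex of $N(u)\cap N(v)$ is $2$-reachable, the set $S=\{u,v\}\cup\bigl(N(u)\cap N(v)\bigr)$ has size at least $2\bigl(\tfrac{2}{3}n-1\bigr)-(n-2)+2=\tfrac{n}{3}+2$, and since every closed neighborhood has size at least $\tfrac{2}{3}n$, every vertex of $G$ lies in $S$ or is adjacent to $S$, hence is reachable. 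Your sketch never arrives at this (or any) concrete witness distribution; ``put $2$ pebbles on $v$ and handle $V(G)\setminus N[v]$ recursively/greedily'' and the hypothetical lemma about the subgraph on the non-neighbors are not arguments.

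A second, more pointed problem: several of your intermediate claims are false as stated because they do not consume the degree hypothesis. You assert that if $D(v)\geq 4$ a ``direct exchange argument'' lets you drop to $D(v)=2$ and redistribute ``keeping solvability but lowering the size.'' The paper's own graph $H_m$ refutes this: it has diameter $2$, $\pis(H_m)=4$ is attained only by stacking $4$ pebbles on $w$, and yet $\pi_2^*(H_m)=5$, so no local exchange can convert a size-$4$ optimal distribution into a $2$-restricted one there. Any correct argument must use $\delta\geq\tfrac{2}{3}n-1$ quantitatively at exactly the point where you wave at it; the paper does so via the size count $|S|\geq\tfrac{n}{3}+2$ above. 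Likewise, Lemma \ref{3ko} concerns a vertex with exactly $3$ pebbles and all others with at most $1$; it does not support the inference ``$D(v)=3$, hence some other vertex carries $\geq 2$ pebbles.'' As written, the proposal has a genuine gap and cannot be completed along the exchange-argument route without essentially rediscovering the explicit construction.
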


In the proof we are going to use the well known fact that the optimal pebbling number of a diameter two graph is at most $4$\cite{diam}. The neighborhood of a vertex $u$ is the set of vertices which are adjacent to $u$. We denote this set by $N(u)$.

\begin{proof}
We show that $D$ has an optimal distribution which is $2$-restricted. The diameter of $G$ is at most two, because if two vertices are not adjacent, then they share a common neighbor.
If $D$ is an optimal distribution and $|D|\leq 3$, then according to Lemma \ref{3ko} $D$ is 2-restricted. 

If $\pi^*(G)=4$, then we choose two non-adjacent vertices $u$ and $v$ and place two pebbles at each of them. The vertices of $N(u)\cap N(v)$ are $2$-reachable. Therefore each vertex which is adjacent to $\{u,v\}\cup \left(N(u)\cap N(v)\right)$ is reachable. The size of this set is at least $\frac{1}{3}n+2$. Therefore any vertex in $G$ is either in this set or adjacent to it. 
So the constructed distribution is optimal and it is $2$-restricted. 
 \end{proof}


Now we present infinitely many graphs whose optimal pebbling number and $2$-restricted optimal pebbling number are different but their minimum degree is large, where large means that it is almost half of the order.

The construction is the following: Consider two complete graphs of order $m$ where $m$ is even. 
Denote the vertices of the complete graphs by $u_1, u_2,\ldots, u_m$ and   $v_1, v_2,\ldots v_m$, respectively. 
Remove the edges $\{u_i,u_{i+1}\}$ and  $\{v_i,v_{i+1}\}$ for all $i$ which are odd. Note that we have removed a perfect matching. 
Add a vertex $w$ and add all the edges $\{w,u_i\}$ and $\{w,v_i\}$ where $i$ is odd. Add the edges $\{u_i,v_i\}$ for each even $i$. 
Denote this graph by $H_m$. For an example see Figure \ref{examplegraph}.
The minimum degree in this graph is $m-1$ and the number of vertices is $2m+1$.

\begin{claim}
If $m\geq 4$, then $\pi_2^*(H_m)=5$ but $\pi^*(H_m)=4$. 
\label{maxpelda}
\end{claim}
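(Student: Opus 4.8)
The plan is to prove the four inequalities $\pi^*(H_m)\le 4$, $\pi^*(H_m)\ge 4$, $\pi_2^*(H_m)\le 5$ and $\pi_2^*(H_m)\ge 5$ separately (writing $k[v]$ for the placement of $k$ pebbles on a vertex $v$); because $\pi_2^*\ge\pi^*$, the last inequality only requires ruling out solvable $2$-restricted distributions of size exactly $4$. First I would record some structure of $H_m$: $w$ is adjacent precisely to the odd-indexed vertices and to no even-indexed vertex; each modified clique is a cocktail-party graph, so within a side the only non-adjacent pairs are the co-pairs $\{u_{2k-1},u_{2k}\}$ and $\{v_{2k-1},v_{2k}\}$; the only vertex pairs at distance $3$ are $\{u_i,v_{i-1}\}$ and $\{v_i,u_{i-1}\}$ for even $i$, all other pairs being within distance $2$; and $\gamma(H_m)=3$, since two closed neighbourhoods cannot cover all $2m+1$ vertices (if neither centre is $w$ the degrees are at most $m-1$; if one centre is $w$, the other would have to dominate all $m$ even-indexed vertices, which no single vertex does when $m\ge 4$).

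For $\pi^*(H_m)\le 4$: four pebbles on $w$ are solvable, since from $w$ one may place two pebbles on any odd vertex and then reach every vertex of the clique containing it. For $\pi^*(H_m)\ge 4$: a solvable distribution of size $3$ has one of three shapes; $1[a]+1[b]+1[c]$ reaches only three vertices; $3[a]$ reaches only $N[a]$, which is not all of $H_m$ as $\Delta(H_m)=m<2m$; and $2[a]+1[b]$ reaches $\{a,b\}\cup N(a)$ if $b\notin N(a)$ (again too small) and is contained in $N[a]\cup N[b]$ if $b\in N(a)$, which would make $\{a,b\}$ dominating, contradicting $\gamma(H_m)=3$. For $\pi_2^*(H_m)\le 5$: I would exhibit the $2$-restricted distribution $D_0=2[u_2]+2[v_4]+1[w]$; the vertex $w$ holds a pebble, each $u_k$ with $k\ne 1$ is reached by $(u_2\to u_k)$ and $u_1$ by first pooling two pebbles on the common neighbour $u_4$ of $u_2$ and $v_4$, and symmetrically each $v_k$ with $k\ne 3$ is reached from $v_4$ and $v_3$ by pooling on the common neighbour $v_2$; so $D_0$ is solvable.

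It remains to show $\pi_2^*(H_m)\ge 5$, i.e. that no $2$-restricted solvable $D$ with $|D|=4$ exists; its support has size $2$, $3$, or $4$. Size $4$ means four singletons, reaching only four vertices. For size $3$, $D=2[a]+1[b]+1[c]$, the reachable set lies in $N[a]\cup N[b]\cup N[c]$, so $\{a,b,c\}$ is a minimum dominating set; every such set contains a vertex of $N[w]$, and one then runs over the possible minimum dominating sets and the three choices of which vertex carries the doubled pebble. The key observation is that a single pebble on $b$ (or $c$) can be used only after it is first supplemented to two, which needs $b\in N(a)$ or a chain $a\to c\to b$; whenever that connectivity is missing, the actual pebbling reachability of $D$ still fails to contain $w$, or some even-indexed $u$, or some even-indexed $v$. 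For size $2$, $D=2[a]+2[b]$, the reachable set lies in $\{a,b\}\cup N(a)\cup N(b)\cup\bigcup_{c\in N(a)\cap N(b)}N[c]$; splitting on the positions of $a$ and $b$ in the trichotomy $\{w\}$ / odd-indexed / even-indexed and computing this closure in each case --- using that $N(a)\cap N(b)$ is small and that a $u$-vertex has only one $v$-neighbour and vice versa --- shows that it always omits at least one vertex. In every case $D$ is not solvable, so $\pi_2^*(H_m)=5$.

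The step I expect to be hardest is the size-$3$ subcase of the lower bound: it forces one to determine the minimum dominating sets of $H_m$ and then verify, for each of them and each placement of the doubled pebble, that the genuine pebbling closure --- not merely $N[a]\cup N[b]\cup N[c]$ --- still misses a vertex. The size-$2$ subcase and all the upper bounds are short, if slightly tedious, computations.
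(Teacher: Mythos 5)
Your overall architecture is sound, and several pieces are genuinely complete: the upper bound $\pi^*(H_m)\le 4$ via four pebbles on $w$, the lower bound $\pi^*(H_m)\ge 4$ via $\gamma(H_m)=3$ (a nice direct argument --- the paper instead derives this from Lemma~\ref{3ko} together with $\pi_2^*\ge 5$), the distribution $2[u_2]+2[v_4]+1[w]$ for $\pi_2^*(H_m)\le 5$ (different from the paper's $2[u_1]+2[v_1]+1[w]$ but it checks out), and the support-size-$4$ subcase. The problem is that the crux of the claim --- ruling out every solvable $2$-restricted distribution of size $4$ --- is left as a plan rather than a proof. In the support-size-$3$ subcase you correctly reduce to: $\{a,b,c\}$ must be a minimum dominating set, and the singleton pebbles are usable only via $b\in N(a)$ or a chain through the other singleton; but the assertion that ``whenever that connectivity is missing, the reachability still misses a vertex'' is exactly the content that needs verifying, set by set and for each placement of the doubled pebble, and you explicitly defer it. The minimum dominating sets of $H_m$ form several parametrized families (e.g.\ $\{w,u_{2k},v_{2l}\}$, $\{w,u_{2k+1},v_{2k+2}\}$, $\{u_1,v_1,v_2\}$-type independent triples, and more), so this is a real enumeration, not a formality. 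The support-size-$2$ subcase is likewise only sketched, though your closure formula $\{a,b\}\cup N(a)\cup N(b)\cup\bigcup_{c\in N(a)\cap N(b)}N[c]$ is correct and that case is indeed a finite trichotomy.

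The paper avoids your hardest case entirely: its Lemma~\ref{specelosztas} shows, for any connected graph, that $\pi_2^*(G)\le 4$ forces the existence of a solvable distribution of the form $2[x]+2[y]$, which collapses the support-size-$3$ and size-$4$ cases into the size-$2$ case at no graph-specific cost; that remaining case is then dispatched with Moews' weight function $W_D(u)=\sum_v D(v)2^{-d(u,v)}$ in three short cases. I would either adopt that reduction (it is the single missing idea that makes the proof short) or actually carry out the enumeration of minimum dominating sets you propose --- as written, the $\pi_2^*(H_m)\ge 5$ half is not yet proved.
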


Chelalli \textit{et al.} have characterized the graphs whose 2-restricted optimal pebbling number is at most 4. $\pi_2^*(G)\leq 4$ iff $G$ has vertices $u$ and $v$ such that $u\cup v\cup N(v)$ is a dominating set of $G$ \cite{restricted}. Besides this can be used to prove $\pi^*_2(H_m)>5$, there are a lot of cases. To shorten the proof, we are going to use a different lemma and the weight argument, which was developed by Moews \cite{Moews}. 

The \emph{weight function} of a pebble distribution $D$, which is defined on the vertex set of $G$, is 
$W_D(u)=\sum_{v\in V(G)}D(v)2^{-d(u,v)}$. If the distribution is clear from the context then we simply write $W(u)$. The weight argument claims that if $W_D(u)<1$, then $u$ is not reachable under $D$.

\begin{lemma}
	If  $\pi^*_2(G)\leq 4$ and $G$ is connected, then $G$ has two vertices $x$ and $y$ such that the distribution 
	
	$$D(v)=\begin{cases}
	2 &\text{if } v\in \{x,y\}\\
	0 &\text{otherwise}	
	\end{cases}$$
is solvable. 

\label{specelosztas}
\end{lemma}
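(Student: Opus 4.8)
The plan is to begin with any solvable $2$-restricted distribution $D$ with $|D|\le 4$ — one exists because $\pi_2^*(G)\le 4$ — and, after a short case analysis on how the pebbles of $D$ are arranged, either read off the two vertices directly or relocate the pebbles of $D$ onto two vertices carrying two pebbles each without destroying solvability. Write $N[u]=N(u)\cup\{u\}$. The only fact I will need about a ``paired'' distribution $E$ with $E(x)=E(y)=2$ and $E\equiv 0$ elsewhere is that the set of vertices reachable under it contains
$$N[x]\cup N[y]\cup\bigcup_{p\in N(x)\cap N(y)}N[p],$$
since from $x$ (resp.\ $y$) we can put a pebble on any one neighbour, and for a common neighbour $p$ of $x$ and $y$ we may move one pebble from $x$ and one from $y$ onto $p$, obtaining two pebbles there and hence reaching all of $N[p]$. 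So for each pair $x,y$ that I propose it will be enough to verify that this union equals $V(G)$.

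First I would clear away the shapes of $D$ in which every occupied vertex carries exactly one pebble: then no pebbling move is available, so the occupied set equals $V(G)$ and $|V(G)|\le 4$. A connected graph on at most four vertices has either a dominating vertex $p$ (take $x=p$ and $y$ any neighbour of $p$, so $N[x]=V(G)$) or a dominating edge $\{p,q\}$ (take $x=p$, $y=q$, so $N[x]\cup N[y]=V(G)$); either way we are done. In every remaining case some vertex $a$ has $D(a)=2$, and the at most two further pebbles form one of the patterns $\emptyset,\ \{a':2\},\ \{b:1\},\ \{b:1,c:1\}$. The pattern $\emptyset$ ($a$ dominates $G$; take $y\in N(a)$) and the pattern $\{a':2\}$ (take $x=a$, $y=a'$) are immediate.

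For the pattern $\{a:2,\ b:1\}$ I would split on whether $a$ and $b$ are adjacent. If they are, then under $D$ one can move the pile at $a$ onto $b$ to create two pebbles there, so $D$ already reaches $N[a]\cup N[b]$; solvability forces $N[a]\cup N[b]=V(G)$, and $x=a,\ y=b$ works. If they are not adjacent, then $D$ reaches only $N[a]\cup\{b\}$, so $V(G)\setminus N[a]=\{b\}$; by connectedness $b$ has a neighbour $b'$, which is forced to lie in $N(a)$, and then $x=a,\ y=b'$ works because $b\in N[b']$.

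The pattern $\{a:2,\ b:1,\ c:1\}$ is the substantive case and the place where I expect all the work to be. Since $a$ carries the only movable pile, the only way $D$ reaches a vertex outside $N[a]\cup\{b,c\}$ is by pushing that pile onto $b$ or onto $c$, so I would branch on whether $a$ is adjacent to $b$ and to $c$. If $a$ is adjacent to both, then $a$ is a common neighbour of $b$ and $c$, and $x=b,\ y=c$ already reaches $N[a]\cup N[b]\cup N[c]=V(G)$. If $a$ is adjacent to neither, then $V(G)=N[a]\cup\{b,c\}$, and a short sub-split on whether $b$ and $c$ are adjacent lets me pick $x,y$ among $a$ and suitable neighbours in $N(a)$ of $b$ and of $c$ so that the common-neighbour term supplies exactly the two vertices $b,c$ that would otherwise be missed. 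The mixed case — $a$ adjacent to exactly one of $b,c$, say to $b$ — needs one further sub-split (on whether $b$ and $c$ are adjacent, and if not, on whether a chosen neighbour $c'$ of $c$ attaches to $a$ or to $b$), after which one of the pairs $(a,c)$, $(a,c')$, $(b,c')$ has the required common neighbour. Carrying out this bookkeeping — in each of about half a dozen configurations naming the pair and pointing to the common neighbour whose closed neighbourhood recovers the one or two vertices not in $N[x]\cup N[y]$ — is the only real content of the proof; everything else is immediate from the observation in the first paragraph. (The one-vertex graph is trivial and should be excluded or read with $x=y$.)
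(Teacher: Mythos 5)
Your proposal is correct and follows essentially the same route as the paper: take a solvable $2$-restricted witness of size at most $4$ and case-analyse the shape of its support, exhibiting the pair $x,y$ in each configuration. The only substantive difference is in the hardest case (one vertex with two pebbles and two vertices with one each): the paper shortcuts it by normalising the witness so that every pebbled vertex is $2$-reachable (Lemma~\ref{nincsko}) and deducing that one of the three pebbled vertices is adjacent to the other two, whereas you run an explicit adjacency case analysis using the reachability formula for paired distributions --- and your sub-cases, including the mixed one with candidate pairs $(a,c)$, $(a,c')$, $(b,c')$, do all close.
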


\begin{proof}
	Since $\pi^*_2(G)\leq 4$, $G$ has a solvable pebble distribution $D'$ whose size is 4 and it places its pebbles at more than one vertex. 
	If $D'$ put pebbles at exactly two vertices, then $D'$ places $2$ pebbles at each of the two vertices, because of the restriction, and we are done.
	
	If $D'$ places pebbles at three vertices $u,v,w$ then we can assume that $D'(u)=2$ and $D(v)=D(w)=1$. By Lemma \ref{nincsko} we can assume that $u,v$ and $w$ are $2$-reachable vertices. No other vertex is $2$-reachable under $D'$. Since $D'$ is solvable, any other vertex of $G$ need to be adjacent to at least one of $u,v$ and $w$. The pebbles of $u$ are needed in any pebbling sequence and $v$ and $w$ can receive a pebble, so one of $u,v,w$ is adjacent to the other two. If $v,w\in N(u)$, then let 
	$$D(x)=\begin{cases}
		2 & \text{if }x\in\{v,w\}\\
		0 & \text{otherwise.}
	\end{cases}$$ 

    In the other case we can assume WLOG that $v\in N \{u,w\}$ and let 
    	$$D(x)=\begin{cases}
    	2 & \text{if }x\in\{u,w\}\\
    	0 & \text{otherwise.}
    \end{cases}$$ 

    Since $u,v,w$ are $2$-reachable vertices under $D$ and every other vertex is adjacent to one of them, $D$ is a solvable distribution.

	
	If $4$ vertices have pebbles under $D'$, then each of them has exactly one pebble and no pebbling move is allowed, so $|V(G)|=4$. Consider a longest path in $G$ and put $2$ pebbles at each of its end vertices. This is a solvable distribution. Either the other two vertices are contained in a path and they can receive a pebble from a neighbor or the path consists $3$ vertices and the fourth vertex is adjacent to the middle vertex which is $2$-reachable.
\end{proof}

We have all the tools to prove Claim \ref{maxpelda}. In the proof we use the vertex lables $u_i, v_i$ and $w$ in the exact same way as we defined $H_m$.

\begin{proof2}{Claim \ref{maxpelda}} 

Let $D^*$ be the following pebble distribution:
	$$D^*(x)=\begin{cases}
	2 & \text{if }x\in\{v_1,u_1\}\\
	1 & \text{if }x=w\\
	0 & \text{otherwise.}
\end{cases}$$ 

It is easy to see that any vertex except $u_2$ and $v_2$ can be reached by a single pebbling move. The sequence $((u_1\rightarrow u_3),(v_1\rightarrow w),(w\rightarrow u_3),(u_3\rightarrow u_2))$  put a pebble at $u_2$. Vertex $v_2$ can get a pebble in a similar manner, so $D^*$ is solvable and $2$-restricted, thus $\pi_2^*(H_m)\leq 5$. 

Indirectly assume that $\pi^*_2(H_m)\leq 4$. By Lemma \ref{specelosztas} there are vertices $x$ and $y$ in $H_m$ such that the distribution $D(x)=D(y)=2,\ D(z)=0$ if $z\notin \{x,y\}$ is solvable. %
We show by case by case analysis that no such vertices do exist. Without loss of generality, assume that $\sum_{i=1}^m  D(u_i)\leq \sum_{i=1}^m  D(v_i)$.
Let $h,k,l,i$ be integers between $1$ and $\left \lceil\frac{m}{2}\right\rceil$ and calculate the indices of the vertices mod $m$.

\textbf{Case 1. $D(w)=2$:} If $D(v_{2k+1})=2$, then $W(u_{2k+2})=\frac{3}{4}$ so $u_{2k+2}$ is not reachable. If $D(v_{2k})=2$, then $W(u_{2k+2})=1$. Assume that there is a sequence of pebbling moves $\sigma$ which moves a pebble to $u_{2k+2}$, so $D\sigma(u_{2k+2})=1$. Since a pebbling move cannot increase $W(u_{2k+2})$ and it is already $1$, any move contained in $\sigma$ must not decrease $W(u_{2k+2})$. The only pebbling moves which satisfy this condition are $(v_{2k}\rightarrow u_{2k})$ and $(w \rightarrow u_{2j+1})$. But after the application of these moves no more moves are available and $u_{2k+2}$ still does not have a pebble, so no such $\sigma$ exists. Therefore $u_{2k+2}$ is not reachable. 

\textbf{Case 2. $D(w)+\sum_{i=1}^m  D(u_i)=0$:} If $D(v_{2k})=D(v_{2l})=2$ and $k\neq l$, then $W(u_{2k-1})=\frac{3}{4}$, so it is not reachable. If $D(v_{2k+1})=D(v_{2l+1})=2$ and $k\neq l$, then $W(u_{2k+2})=\frac{3}{4}$. If $D(v_{2k+1})=D(v_{2l})=2$, then $W(u_{2l-1})=\frac{3}{4}$.   

\textbf{Case 3. $\sum_{i=1}^m  D(u_i)=2$:} If $D(v_{2k+1})=D(u_{2l+1})=2$, then $W(u_{2k+2})\leq\frac{3}{4}$.

If $D(v_{2k})=D(u_{2l})=2$, then $W(w)=1$. 
The only moves which do not decrease $W(w)$ are $(v_{2k}\rightarrow v_{2h+1})$ and $(u_{2l}\rightarrow u_{2i+1})$. But after the execution of these moves no pebbling move is available and $w$ does not have a pebble so $w$ is not reachable in this case. 

If $D(v_{2k+1})=D(u_{2l})=2$, then $W(u_{2l-1})=1$. The only move of the pebbles of $v_{2k+1}$ which does not decrease $W(u_{2l-1})$ is $(v_{2k+1}\rightarrow w)$. However $W(w)<2$, so the pebble which arrives at $w$ cannot be moved further and the pebbles of $u_{2l}$ are not enough to reach $u_{2l-1}$, so $u_{2l-1}$ is unreachable. 

The subcase $D(u_{2k+1})=D(v_{2l})=2$ is equivalent to the previous subcase due to symmetry. 

We have find an unreachable vertex in each case, therefore no such $D$ exists. 
This contradicts Lemma \ref{specelosztas} and thus $\pi_2^*(H_m)\geq 5$. $D^*$ is a solvable $2$-restricted distribution and $|D^*|=5$, therefore $\pi^*(H_m)=5$.

	Vertices $u_i$ and $v_i$ are neighbors of $w$ if $i$ is odd. When $i$ is even, then $u_i$ has a neighbor $u_j$ where $j$ is odd. 
Therefore the distance between any vertex and $w$ is at most $2$.
Hence placing $4$ pebbles at $w$ is a solvable distribution of size $4$. So $\pi^*(H_m)\leq 4$.

Indirectly assume that $\pi^*(H_m)< 4$ and let $D'$ be an optimal distribution of $G$, thus $|D'|\leq 3$. By lemma \ref{3ko} $D'$ does not place $3$ pebbles at the same vertex, thus it is $2$-restricted solvable distribution, which is a contradiction. 
\end{proof2}

 $\delta(H_m)=\frac{|V(H_m)|-3}{2}$ and $\pi_2^*(H_m)\neq \pi^*(H_m)$. This result and Claim \ref{ketharmad} together imply the existence of the following constant $c$: If $\delta(G)\geq c |V(G)|$, then $\pi_2^*(G)=\pi^*(G)$, but for any $\epsilon>0$ there is a graph $H$ such that $\delta(H)\geq (c-\epsilon)|V(G)|$ and $\pi_2^*(H)\neq\pi^*(H)$. We have tried to determine this constant, but up to this day we did not succeed. Our investigations lead us to state the next conjecture. 
 
\begin{conjecture}
If $\delta(G)\geq \frac{1}{2}|V(G)|$, then $\pi_2^*(G)=\pi^*(G)$.
\end{conjecture}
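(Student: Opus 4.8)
Proof proposal.

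The plan is to reduce the statement to one genuinely hard case and to settle that case by exhibiting a concrete $2$-restricted solvable distribution of size $4$. Set $n:=|V(G)|$ and assume $\delta(G)\ge n/2$. First, $G$ is connected and of diameter at most $2$: if $x,y$ are non-adjacent then $N(x)$ and $N(y)$ both lie in $V(G)\setminus\{x,y\}$, so $|N(x)\cap N(y)|\ge 2\delta(G)-(n-2)\ge 2>0$, hence $x$ and $y$ have a common neighbour. Consequently $\pi^*(G)\le 4$ by the diameter-two bound of \cite{diam}. If $\pi^*(G)\le 3$ we are done by Corollary \ref{max3}, so assume $\pi^*(G)=4$. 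Then $G$ has no dominating vertex and no dominating edge, since an edge $uv$ with $N[u]\cup N[v]=V(G)$ would make the size-$3$ distribution $D(u)=2,\ D(v)=1$ solvable. It therefore suffices to produce a $2$-restricted solvable distribution of size $4$, for then $\pi_2^*(G)\le 4=\pi^*(G)\le\pi_2^*(G)$. By Lemma \ref{specelosztas}, $\pi_2^*(G)\le 4$ holds if and only if the distribution $D_{x,y}$ placing two pebbles on each of two vertices $x,y$ is solvable for some pair $x,y$, so it is enough to find such a pair.

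The key object is the reachable set of $D_{x,y}$. Writing $C:=N(x)\cap N(y)$, every vertex of $C$ is $2$-reachable under $D_{x,y}$ (push one pebble from $x$ and one from $y$ onto it), so the set of vertices reachable under $D_{x,y}$ contains $R(x,y):=N[x]\cup N[y]\cup\bigcup_{v\in C}N[v]$, and it suffices to choose $x,y$ with $R(x,y)=V(G)$. For non-adjacent $x,y$ one has $N[x]\cap N[y]=C$, hence $|N[x]\cup N[y]|=|N[x]|+|N[y]|-|C|\ge 2(\tfrac n2+1)-|C|=n+2-|C|$, so at most $|C|-2$ vertices lie outside $N[x]\cup N[y]$. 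Every vertex $z$ outside $N[x]\cup N[y]$ makes $\{x,y,z\}$ an independent set, and for such $z$ we need exactly that $z$ has a neighbour in $C$, i.e.\ that $x,y,z$ have a common neighbour.

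This disposes of the case $\alpha(G)\le 2$: then no non-adjacent pair misses any vertex, so $N[x]\cup N[y]=V(G)$ and $D_{x,y}$ is solvable for every non-adjacent pair, and such a pair exists because $\pi^*(G)=4$ forces $G\ne K_n$. The crux is the case $\alpha(G)\ge 3$. Here I would fix a non-adjacent pair $x,y$ for which the number of vertices outside $R(x,y)$ is minimum, assume this number is positive, and pick a vertex $z$ missed by $R(x,y)$; then $\{x,y,z\}$ is independent and $x,y,z$ have no common neighbour. Partitioning $V(G)\setminus\{x,y,z\}$ according to adjacency to $x$, $y$, $z$ and using $\delta(G)\ge n/2$ together with the absence of a common neighbour gives $|N(x)\cap N(y)\setminus N(z)|+|N(x)\cap N(z)\setminus N(y)|+|N(y)\cap N(z)\setminus N(x)|\ge \tfrac n2+3$, so two of the three vertices, say $x,y$, have at least $\tfrac13(\tfrac n2+3)\ge \tfrac n6$ common neighbours, none of them adjacent to $z$. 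The plan is then to combine this large common neighbourhood with $\delta(G)\ge n/2$ to reach one of two conclusions: either a slightly different non-adjacent pair has strictly fewer missed vertices, contradicting minimality, or $G$ has a dominating edge, contradicting $\pi^*(G)=4$ — for instance by locating two adjacent vertices of $C\cup\{x,y\}$ whose closed neighbourhoods already cover the at most $|C|-2$ vertices outside $N[x]\cup N[y]$.

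The main obstacle is precisely this last step. Turning an independent triple having no common neighbour but linearly many pairwise common neighbours into a dominating edge (or into an improved pair) appears to require a somewhat intricate analysis of how the three neighbourhoods overlap, and it is conceivable that distributions of the form $D_{x,y}$ are insufficient in some configuration, in which case one would have to fall back on distributions of shape $2+1+1$ or on $2+2$ supported on an adjacent pair. I would expect the resolution to hinge on the structural claim that, when $\delta(G)\ge n/2$, an independent triple with no common neighbour cannot coexist with the absence of a dominating edge; proving that claim is where I anticipate the real difficulty.
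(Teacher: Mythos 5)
This statement appears in the paper as an open conjecture, not a theorem: the author explicitly reports having tried and failed to determine the exact degree threshold, and the paper contains no proof of it (the only proved results in this direction are Claim \ref{ketharmad}, which requires $\delta(G)\ge\frac{2}{3}n-1$, and the graphs $H_m$ showing that the constant $\frac12$ would be essentially best possible). Your reduction is correct as far as it goes and in fact rediscovers the paper's own reformulation: combining Corollary \ref{max3} with Lemma \ref{specelosztas} shows the conjecture is equivalent to the paper's second conjecture, namely that every $G$ with $\delta(G)\ge n/2$ admits vertices $x,y$ for which $\{x,y\}\cup\bigl(N(x)\cap N(y)\bigr)$ is dominating. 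Your preliminary steps all check out: diameter at most $2$ and hence $\pi^*(G)\le 4$; the case $\pi^*(G)\le 3$ via Corollary \ref{max3}; the description of the reachable set of $D_{x,y}$ as $N[x]\cup N[y]\cup\bigcup_{v\in C}N[v]$; the count showing at most $|C|-2$ vertices escape $N[x]\cup N[y]$ for a non-adjacent pair; the disposal of $\alpha(G)\le 2$; and the pigeonhole estimate for an independent triple with no common neighbour. One remark: the fallback you leave open at the end --- retreating to distributions of shape $2+1+1$ if no pair works --- is not actually available, because Lemma \ref{specelosztas} says that if no $D_{x,y}$ is solvable then $\pi_2^*(G)\ge 5$; you are therefore committed to producing a working pair.

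The gap is the one you name yourself, and it is not a technical loose end but the entire substance of the conjecture. Everything has been reduced to the implication: if $\delta(G)\ge n/2$ and $G$ contains an independent triple $\{x,y,z\}$ with no common neighbour (so that some pair, say $x,y$, has at least $n/6$ common neighbours none of which sees $z$), then either some other non-adjacent pair misses strictly fewer vertices or $G$ has a dominating edge. No argument is offered for this implication, and it is precisely the structural question the author reports being unable to settle. Nothing in your write-up excludes a graph in which every non-adjacent pair $x,y$ leaves some vertex with no neighbour in $N(x)\cap N(y)$ while no adjacent pair dominates either; until such configurations are ruled out (or exhibited, which would refute the conjecture), this is a correct reduction to the known open core with the core untouched, not a proof.
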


Note that those graphs whose minimum degree is at least half of their order have diameter at most two. Corollary \ref{max3} implies that it is enough to show that each of these graphs has a solvable $2$-restricted pebble distribution of size $4$. We can use the results of \cite{restricted} or Lemma \ref{specelosztas} to reformulate the previous conjecture as a dominating question.  

\begin{conjecture}
If $\delta(G)\geq \frac{1}{2}|V(G)|$, then there are vertices $u$ and $v$ in $G$ such that $\{u,v\}\cup \left (N(u)\cap N(v) \right)$ is a dominating set.
\end{conjecture}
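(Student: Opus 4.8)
The statement is already the ``dominating'' reformulation, so the plan is to prove it directly by an extremal choice of the pair. First dispose of the trivial case: if $G$ is complete, any single vertex dominates, so assume $G$ has a non-edge; recall (as the paper already notes) that $\delta(G)\ge\frac12|V(G)|$ forces diameter at most two, since for non-adjacent $x,y$ we have $|N(x)|+|N(y)|\ge|V(G)|>|V(G)|-2$. Fix a non-edge $\{u,v\}$ and partition $V(G)\setminus\{u,v\}$ into $A=N(u)\setminus N(v)$, $B=N(v)\setminus N(u)$, $C=N(u)\cap N(v)$ and $R$ (adjacent to neither). From $|A|+|C|=\deg(u)\ge\frac n2$, $|B|+|C|=\deg(v)\ge\frac n2$ and $|A|+|B|+|C|+|R|=n-2$ one gets the key inequality $|C|\ge|R|+2$: there are always at least two more common neighbours of $u,v$ than uncovered vertices. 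Since $N[u]\cup N[v]=V(G)\setminus R$, the closed neighbourhood of $\{u,v\}\cup C$ is $(V(G)\setminus R)\cup N(C)$, so the statement is equivalent to: some non-edge $\{u,v\}$ can be chosen so that every vertex of $R$ has a neighbour in $C$.

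Next I make the extremal choice: among all non-edges pick $\{u,v\}$ with $|R(u,v)|$ minimum, and suppose for contradiction that even then some $w\in R$ has $N(w)\cap C=\emptyset$, so $N(w)\subseteq A\cup B\cup(R\setminus\{w\})$. The engine is a single \emph{relocation} step: since $\{u,w\}$ is again a non-edge, a short computation gives $R(u,w)=\{v\}\cup(B\setminus N(w))\cup(R\setminus(\{w\}\cup N(w)))$ and $N(u)\cap N(w)\subseteq A$, whence $|R(u,w)|=|B|+|R|-\deg(w)+|N(w)\cap A|$; minimality of $|R|$ then yields $|N(w)\cap A|\ge\deg(w)-|B|\ge\frac n2-|B|$, and symmetrically $|N(w)\cap B|\ge\frac n2-|A|$. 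Combining these with $|N(w)\cap A|\le|A|$, with $\deg(w)\le|A|+|B|+|R|-1$, and with $|C|\ge|R|+2$ forces $|A|+|B|\ge\frac n2$ and then $|R|\le\frac n4-2$; in particular $R$ must be small and $w$ must send almost all of its $\ge\frac n2$ neighbours into $A\cup B$ (at least $\frac n2-|B|$ into $A$ and $\frac n2-|A|$ into $B$), being adjacent to all of $A\cup B$ in the balanced extreme.

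The hard part — and the reason the paper leaves this as a conjecture — is closing the surviving regime, where $R$ is moderately large and $w$ genuinely splits its neighbourhood between $A$ and $B$, so that the single relocation does not bite. Two natural continuations: (i) iterate, relocating one level deeper, since for $a\in N(w)\cap A$ the pair $\{a,v\}$ is a non-edge and minimality of $|R|$ together with $a\sim w$, $a\not\sim v$ should produce further inequalities that, chained with those above, become inconsistent; or (ii) strengthen the extremal choice with a tie-breaker, taking among minimum-$|R|$ non-edges one that minimises the number of $w\in R$ with no neighbour in $C$, and showing that any such $w$ lets us relocate to a non-edge with strictly fewer bad vertices. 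I expect the genuine obstacle to sit exactly here: converting the global surplus $|C|\ge|R|+2$ (together with $\delta\ge\frac n2$) into a covering of $R$ by $C$, uniformly over the bad configurations that the single relocation cannot exclude — which is precisely the place where the constant $\frac12$ is tight, as witnessed by the graphs $H_m$.
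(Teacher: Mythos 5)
This statement is one of the paper's \emph{conjectures}: the author explicitly states that the problem is open (``We have tried to determine this constant, but up to this day we did not succeed''), so there is no proof in the paper to compare against, and your proposal does not close the gap either --- as you yourself acknowledge. Your partial computations are correct: for a non-edge $\{u,v\}$ with $A=N(u)\setminus N(v)$, $B=N(v)\setminus N(u)$, $C=N(u)\cap N(v)$ and $R$ the undominated set, the identity $|A|+|B|+2|C|\geq n$ together with $|A|+|B|+|C|+|R|=n-2$ does give $|C|\geq |R|+2$, and the relocation inequality $|R(u,w)|=|B|+|R|-\deg(w)+|N(w)\cap A|$ for a bad vertex $w\in R$ with $N(w)\cap C=\emptyset$ is also right, yielding $|N(w)\cap A|\geq \frac{n}{2}-|B|$, $|N(w)\cap B|\geq\frac{n}{2}-|A|$, hence $|A|+|B|\geq\frac{n}{2}$ and $|R|\leq\frac{n}{4}-2$ under the minimality assumption. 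But these are necessary conditions on a putative counterexample, not a contradiction.

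The genuine gap is exactly where you place it: nothing rules out a configuration in which $R$ is nonempty but small, every bad $w\in R$ sends its $\geq\frac{n}{2}$ neighbours into $A\cup B\cup R$ split roughly as the inequalities demand, and the single relocation $\{u,v\}\mapsto\{u,w\}$ leaves $|R|$ unchanged. Your two proposed continuations (iterating the relocation one level deeper, or adding a tie-breaker that minimises the number of bad vertices) are only sketched, and there is no argument that either terminates or produces an inconsistency; the counting surplus $|C|\geq|R|+2$ says $C$ is large enough to cover $R$ in aggregate, but gives no handle on the bipartite adjacency between $C$ and $R$, which is the whole difficulty. One further caution: the graphs $H_m$ have $\delta(H_m)=\frac{n-3}{2}<\frac{n}{2}$, so they witness that the threshold cannot be pushed much below $\frac{1}{2}$, but they do not certify that $\frac{1}{2}$ itself suffices --- the true threshold could lie strictly between $\frac{n-3}{2}$ and $\frac{2n}{3}-1$, and any completed proof would have to locate where in your ``surviving regime'' the argument actually uses $\delta\geq\frac{n}{2}$ rather than a weaker bound. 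As it stands, the proposal is a reasonable record of partial progress on an open problem, not a proof.
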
  

\begin{question}
	\begin{enumerate}
		\item Can $\pi_2^*(T)$ can be calculated in polynomial time if $T$ is a tree?
		\item Can $\pi^*(T)$ can be calculated in polynomial time if $T$ is a tree? 
		\item What is the maximum $c$ such that $\delta(G)\geq c |V(G)|$ enforces $\pi^*(G)=\pi_2^*(G)$?  
	\end{enumerate}
\end{question}

\section*{Acknowledgments}
This research was supported by the Ministry of Innovation and
Technology and the National Research, Development and Innovation
Office within the Artificial Intelligence National Laboratory of Hungary.

\end{document}